\title[Weak BLD mappings]{Weak BLD mappings and Hausdorff measure}
\author{Piotr Haj{\l}asz,  Soheil Malekzadeh and Scott Zimmerman}
\address{P.\ Haj{\l}asz: Department of Mathematics, University of Pittsburgh, 301
  Thackeray Hall, Pittsburgh, PA 15260, USA, {\tt hajlasz@pitt.edu}}
\address{S. Malekzadeh: Department of Mathematics, University of Pittsburgh, 301
  Thackeray Hall, Pittsburgh, PA 15260, USA, {\tt som13@pitt.edu}}
\address{S. Zimmerman: Department of Mathematics, University of Pittsburgh, 301
  Thackeray Hall, Pittsburgh, PA 15260, USA, {\tt srz5@pitt.edu}}
\thanks{P.H.\ was supported by NSF grant DMS-1500647.}
\def\rank{{\rm rank\,}}
\newcommand{\cJ}{\mathcal J}
\def\eps{\varepsilon}
\def\id{{\rm id\, }}
\def\H{{\mathcal H}}
\newtheorem{theorem}{Theorem}
\newtheorem{lemma}[theorem]{Lemma}
\def\diam{{\rm diam\,}}
\theoremstyle{definition}
\newcommand{\barint}{
\rule[.036in]{.12in}{.009in}\kern-.16in \displaystyle\int }
\newcommand{\barcal}{\mbox{$ \rule[.036in]{.11in}{.007in}\kern-.128in\int $}}
\newcommand{\bbbn}{\mathbb N}
\newcommand{\bbbr}{\mathbb R}
\newcommand{\Heis}{\mathbb H}
\def\ap{\operatorname{ap}}
\def\diam{\operatorname{diam}}
\def\mvint_#1{\mathchoice
          {\mathop{\vrule width 6pt height 3 pt depth -2.5pt
                  \kern -8pt \intop}\nolimits_{\kern -3pt #1}}%
          {\mathop{\vrule width 5pt height 3 pt depth -2.6pt
                  \kern -6pt \intop}\nolimits_{#1}}%
          {\mathop{\vrule width 5pt height 3 pt depth -2.6pt
                  \kern -6pt \intop}\nolimits_{#1}}%
          {\mathop{\vrule width 5pt height 3 pt depth -2.6pt
                  \kern -6pt \intop}\nolimits_{#1}}}
\numberwithin{theorem}{section} \numberwithin{equation}{section}
\begin{document}

\subjclass[2010]{28A75, 30L10, 53C17, 54E40}
\keywords{metric spaces, bounded length distortion, Hausdorff measure, Lipschitz mappings,
Sard theorem, Heisenberg group}
\sloppy


\begin{abstract}
We prove that if $\Phi:X\to Y$ a mapping of weak bounded length distortion from
a quasiconvex and complete metric space $X$ to any metric space $Y$, then 
for any Lipschitz mapping $f:\bbbr^k\supset E\to X$ we have that
$\H^k(f(E))=0$ in $X$ if and only if $\H^k(\Phi(f(E)))=0$ in $Y$.
This generalizes an earlier result of Haj\l{}asz and Malekzadeh where
the target space $Y$ was a Euclidean space $Y=\bbbr^N$.
\end{abstract}

\maketitle

\centerline{\em To Carlo Sbordone on his 70th birthday}

\section{Introduction}

A mapping $f : X \to Y$ between metric spaces is said to have a 
{\em weak bounded length distortion} ({\em weak} BLD) property 
if there is a constant $M \geq 1$ such that, for all {\em rectifiable} curves $\gamma$ in $X$, 
the length of $f\circ\gamma$ is comparable to that of $\gamma$ 
in the following sense:
\begin{equation}
\label{eq1}
M^{-1}\ell_X(\gamma) \leq \ell_Y(f \circ \gamma) \leq M \ell_X(\gamma).
\end{equation}
This definition was introduced in \cite{HM1,HM2} and it was motivated by earlier work of
Martio and V\"ais\"al\"a \cite{MV} and Le Donne \cite{ledonne}.

Martio and V\"ais\"al\"a \cite{MV} introduced mappings of {\em bounded length distortion} (BLD).
These are mappings $f:\bbbr^n\supset\Omega\to\bbbr^n$ defined on an open set $\Omega$ 
that are open, discrete, sense preserving and satisfy
\eqref{eq1} for {\em all} curves $\gamma$ in $\Omega$, see also \cite{HM2}. Subsequently, 
Le Donne \cite{ledonne} introduced mappings of {\em bounded length distortion} (BLD) as mappings between metric spaces
that satisfy \eqref{eq1} for {\em all} curves $\gamma$ in $X$, but without the topological requirements of
being open, discrete, or sense preserving. 

The class of BLD mappings plays a fundamental role in the 
contemporary development of geometric analysis and geometric topology, especially in the context of branched coverings of metric spaces. 
See e.g. 
\cite{drasinp,heinonenk,HK,HKM,heinonenr,heinonenr2,heinonens,ledonne,ledonnep,luisto,pankka}.

It is important to observe that, in general, the class of weak BLD mappings may be much
larger than the class of BLD mappings given by Le Donne. Indeed, the identity mapping
$\id:\Heis^n\to\bbbr^{2n+1}$ from the Heisenberg group $\Heis^n$ into Euclidean space is weak BLD. 
However, it is not BLD
since it maps the $t$-axis, which has Hausdorff dimension two with respect to the Carnot-Carath\'eodory metric, 
to the Euclidean $t$-axis
which has locally finite length.

The aim of this paper is to prove Theorem~\ref{main}. 
This generalizes Theorem~4.2 from \cite{HM1} in which the same statement was proven for
weak BLD mappings from $X$ into a Euclidean space $\bbbr^N$. 
Our proof will follow a similar argument as in \cite{HM1}. 
However, a new proof is required as
the co-domain $Y$ is no longer Euclidean but is instead an arbitrary metric space. 
The main difference between the proofs appears at the end 
where we apply Lemma~\ref{separable} and estimate the length of the curve 
$\Phi\circ\Gamma$.
The arguments in the proof which are in 
\cite{HM1} will only be sketched,
and we refer the reader to \cite{HM1} for more details.

A metric space $(X,d)$ is said to be \emph{quasiconvex} 
if there is a constant $C_q \geq 1$ such that, 
for any $x,y\in X$, there is a rectifiable curve $\gamma:[0,1]\to X$ connecting
$x$ and $y$ (i.e. $\gamma(0)=x$ and $\gamma(1)=y$) 
whose length satisfies $\ell(\gamma)\leq C_q d(x,y)$. 
Such a curve $\gamma$ will be called \emph{quasiconvex}.
Note that, if $(X,d)$ is quasiconvex, 
then any weak BLD mapping $f:X\to Y$ is $MC_q$-Lipschitz.

\begin{theorem}
\label{main}
Let $(X,d_X)$ be a complete and quasiconvex metric space, 
and let $(Y,d_Y)$ be any metric space. 
Let $\Phi:X\to Y$ be a weak BLD mapping. 
Then for any $k\in \bbbn$ and any Lipschitz map $f:E \to X$ 
defined on a measurable set $E\subset \mathbb{R}^k$, the following conditions are equivalent:
\begin{enumerate}
\item $\H^k(f(E)) = 0$ in $X$,
\item $\H^k(\Phi(f(E))) = 0$ in $Y$.
\end{enumerate}
\end{theorem}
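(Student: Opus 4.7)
I would split the equivalence into its two implications.

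The direction $(1)\Rightarrow(2)$ is immediate from the observation preceding the statement: since $X$ is quasiconvex, $\Phi$ is $MC_q$-Lipschitz, hence so is $\Phi\circ f$, and Lipschitz maps cannot increase $k$-dimensional Hausdorff measure up to a multiplicative constant. Thus $\H^k(f(E))=0$ forces $\H^k(\Phi(f(E)))=0$.

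The substantive direction is $(2)\Rightarrow(1)$, which I would prove by contrapositive: assuming $\H^k(f(E))>0$, show $\H^k(\Phi(f(E)))>0$. By Kirchheim's structure theorem for Lipschitz maps into arbitrary metric spaces, $E$ decomposes modulo a Lebesgue-null set into countably many measurable pieces on each of which $f$ is bi-Lipschitz onto its image, and at least one such piece must have $\H^k$-positive image. Replacing $E$ by this piece, I may assume $f$ is $L$-bi-Lipschitz and $\leb^k(E)>0$. I would then fix a Lebesgue density point $x_0\in E$ at which the approximate metric differentials of both $f$ and $\Phi\circ f$ exist (almost every point works, by Kirchheim again), so that near $x_0$ the image $f(E)$ is infinitesimally modelled on a $k$-dimensional normed ball in $X$.

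Following the blueprint of \cite{HM1}, I would construct a curve (or family of curves) $\Gamma$ in $X$ that parametrises enough of $f(E)$ near $x_0$, obtained by taking $f$-images of line segments through nearby density points of $E$ and, where needed, splicing them together using the quasiconvex arcs guaranteed by the hypothesis on $X$. The bi-Lipschitz control on $f$ yields a lower bound on $\ell_X(\Gamma)$ in terms of the $k$-dimensional Lebesgue content of the corresponding set of parameters in $E$, and the weak BLD inequality \eqref{eq1} then gives the quantitative image-length estimate
\[
\ell_Y(\Phi\circ\Gamma)\;\ge\;M^{-1}\,\ell_X(\Gamma).
\]

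The crux, and the place where the proof genuinely diverges from \cite{HM1}, is the final step. In the Euclidean target setting, one converts this one-dimensional length information into a $k$-dimensional measure lower bound by projecting to $\bbbr^k$ and invoking the area formula; such projections are unavailable when $Y$ is an arbitrary metric space. This is precisely the role of Lemma~\ref{separable}: it is designed to substitute for the missing linear projection argument, presumably by producing a countable family of $1$-Lipschitz test functionals on $\Phi(f(E))$ (for instance distances to a countable dense subset) that together force positive $k$-dimensional Hausdorff measure. Marrying the above length estimate for $\Phi\circ\Gamma$ with the information extracted via Lemma~\ref{separable} is the step I expect to be the main obstacle; with it in hand, one concludes $\H^k(\Phi(f(E)))>0$, completing the contrapositive.
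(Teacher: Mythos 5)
Your direction $(1)\Rightarrow(2)$ is fine and matches the paper. The direction $(2)\Rightarrow(1)$, however, has a genuine gap, and moreover you have misread the role of Lemma~\ref{separable}. That lemma has nothing to do with producing ``test functionals'' or substituting for a projection argument: its sole purpose is to replace $X$ by a \emph{separable}, complete, quasiconvex subspace $\tilde X\supset f(E)$, so that $\Phi(\tilde X)$ is a separable subset of $Y$ and therefore admits an isometric Kuratowski embedding $\tilde\kappa:\Phi(\tilde X)\to\ell^\infty$. The entire content of the paper's proof of Theorem~\ref{main} is this reduction: once one is in $\ell^\infty$, one invokes Theorem~\ref{main-quasi}, whose proof works coordinatewise with approximate derivatives of the components $(\Phi\circ f)_i$ and a Sard-type covering argument. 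In particular, the paper never attempts to prove a \emph{lower} bound on $\H^k(\Phi(f(E)))$ in an abstract target; it works with the hypothesis $\H^k(\Phi(f(E)))=0$ directly, via the rank condition $(3)$ of Theorem~\ref{main-quasi}.

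The concrete gap in your plan is exactly the step you yourself flag as ``the main obstacle'': passing from the one-dimensional estimate $\ell_Y(\Phi\circ\Gamma)\ge M^{-1}\ell_X(\Gamma)$ for a family of curves to the $k$-dimensional conclusion $\H^k(\Phi(f(E)))>0$. No mechanism is offered for this, and none of the ingredients you list (Kirchheim's decomposition, density points, metric differentials of $f$) supplies one, because length lower bounds for curves in the image do not by themselves control $k$-dimensional Hausdorff measure when $k\ge 2$. The paper avoids this difficulty entirely: the weak BLD inequality is used in the \emph{opposite} direction, namely $d(f(x),f(y))\le\ell(\Gamma)\le M\,\ell(\Phi\circ\Gamma)$, to show that smallness of $\ell(\Phi\circ\Gamma)$ (which follows from the vanishing of the partial derivatives $\partial g_\ell/\partial x_i$ on the rank-$j$ stratum $K_j$, available only after the $\ell^\infty$ reduction) forces $f(K_j\cap Q_x)$ to be covered by few small balls. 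To repair your argument you would either have to supply the missing $k$-dimensional lower bound machinery, or abandon the contrapositive framing and follow the reduction to $\ell^\infty$ as the paper does.
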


If there are no rectifiable curves in $X$, then any mapping $\Phi:X\to Y$ is weak BLD.
Thus the assumption that
the space $X$ is quasiconvex is a very natural one.
Clearly, bi-Lipschitz mappings preserve sets of Hausdorff measure zero, 
but the weak BLD condition is much weaker than bi-Lipschitz continuity. 
Recall, the identity map from the
Heisenberg group $\Heis^n$ to $\bbbr^{2n+1}$ is weak BLD.
This together with Theorem~\ref{main} can be used to prove unrectifiabilty of the Heisenberg group (see \cite{HM1}).

Another application of the theorem is to a result of Gromov. 
In \cite[Theorem 2.4.11]{gromov-five}, Gromov proved that any Riemannian manifold of dimension $n$ admits a mapping into 
$\mathbb{R}^n$ that preserves lengths of curves. 
It follows from Theorem~\ref{main} that the Jacobian of such a mapping is different than zero almost everywhere, 
and hence there is no curve-length preserving mapping into $\mathbb{R}^m$ for $m < n$. 
While this result is known, Theorem~\ref{main} provides a new perspective. 
For other comments and applications see \cite{HM1} and \cite{HM2}.

We will prove Theorem~\ref{main} as a consequence of the following result.
\begin{theorem}
\label{main-quasi}
Suppose that $(X,d)$ is a complete and quasiconvex metric space, 
and let $\Phi : X \to \ell^\infty$ be a weak BLD mapping.
Then, for any $k \in \bbbn$ and any Lipschitz map $f : E \to X$ defined on a measurable set 
$E \subset \mathbb{R}^k$, the following conditions are equivalent:
\begin{enumerate}
\item $\H^k(f(E)) = 0$ in $X$,
\item $\H^k(\Phi(f(E))) = 0$ in $\ell^{\infty}$,
\item $\rank(\ap D(\Phi\circ f)) < k$ a.e. in $E$.
\end{enumerate}
\end{theorem}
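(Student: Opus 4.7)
I would establish the chain of implications $(1)\Rightarrow(2)\Rightarrow(3)\Rightarrow(1)$. Set $G:=\Phi\circ f:E\to\ell^\infty$; this map is Lipschitz because quasiconvexity of $X$ forces $\Phi$ to be $MC_q$-Lipschitz. Then $(1)\Rightarrow(2)$ is immediate from the Lipschitz property of $\Phi$, and $(2)\Leftrightarrow(3)$ follows from the area formula for Lipschitz maps into $\ell^\infty$,
\begin{equation*}
\int_E J_k(\ap DG(x))\,d\leb^k(x)=\int_{\ell^\infty} N(G,E,y)\,d\H^k(y).
\end{equation*}
Since $G(E)$ is separable, the formula reduces to a separable subspace of $\ell^\infty$ where Kirchheim's framework applies. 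Vanishing of either side is equivalent to $J_k(\ap DG)=0$ a.e., and hence to $\rank(\ap DG)<k$ a.e.

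The heart of the proof is $(3)\Rightarrow(1)$, where the weak BLD hypothesis enters. The plan is to establish the pointwise comparison of metric-derivative seminorms
\begin{equation*}
\mathrm{md}(f,x)(v)\leq M\,\mathrm{md}(G,x)(v)\qquad\text{for a.e. }x\in E\text{ and all }v\in\bbbr^k. \tag{$\ast$}
\end{equation*}
To derive $(\ast)$, fix such an $x$, a direction $v$, and $t>0$ small enough that $[x,x+tv]\subset E$, and consider the rectifiable curve $\Gamma_t:=f\circ[x,x+tv]$ in $X$. The weak BLD inequality applied to $\Gamma_t$ gives
\begin{equation*}
d_X(f(x),f(x+tv))\leq\ell_X(\Gamma_t)\leq M\,\ell_{\ell^\infty}(\Phi\circ\Gamma_t).
\end{equation*}
Here Lemma~\ref{separable} is the essential new ingredient: it lets us pass to a separable subspace of $\ell^\infty$ that contains the image of $\Phi\circ\Gamma_t$, and there the classical length formula $\ell_{\ell^\infty}(\Phi\circ\Gamma_t)=\int_0^t\mathrm{md}(G,x+sv)(v)\,ds$ is valid. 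Dividing by $t$ and applying Lebesgue differentiation at $x$ yields $(\ast)$.

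Given $(\ast)$, the Jacobians of the seminorms $\mathrm{md}(f,x)$ and $\mathrm{md}(G,x)$ satisfy $J_k^{\mathrm{md}}(f,x)\leq M^k J_k^{\mathrm{md}}(G,x)$. Since $\rank(\ap DG(x))<k$ a.e.\ is equivalent to $J_k^{\mathrm{md}}(G,x)=0$ a.e., we conclude $J_k^{\mathrm{md}}(f,x)=0$ a.e.\ as well. Kirchheim's metric area formula applied to the Lipschitz map $f:E\to X$ then gives
\begin{equation*}
\H^k(f(E))=\int_E J_k^{\mathrm{md}}(f,x)\,d\leb^k(x)=0,
\end{equation*}
which is $(1)$.

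The principal new difficulty compared to the Euclidean-target case of \cite{HM1} is the non-separability of $\ell^\infty$: neither the classical length formula for rectifiable curves nor the area formula is directly available for arbitrary maps into $\ell^\infty$. Lemma~\ref{separable} is precisely the technical device that reduces the computation of $\ell_{\ell^\infty}(\Phi\circ\Gamma_t)$ in the proof of $(\ast)$ to a separable subspace of $\ell^\infty$, where the classical theory applies. I expect this to be the main obstacle in carrying out the plan in full.
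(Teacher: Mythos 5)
Your architecture ($(1)\Rightarrow(2)$ trivially, then a pointwise metric-differential comparison plus Kirchheim's area formula for $(3)\Rightarrow(1)$) is genuinely different from the paper's global Sard-type covering argument, but as written it has two real gaps. First, the equivalence $(2)\Leftrightarrow(3)$ does not ``follow from the area formula.'' Kirchheim's area formula for a Lipschitz $G:E\to\ell^\infty$ has as its integrand the Jacobian of the \emph{metric} differential $\mathrm{md}(G,x)$, not any Jacobian of the componentwise matrix $\ap DG(x)$; for $\ell^\infty$-valued maps these are different objects, because $\Vert G(x+tv)-G(x)\Vert_\infty=\sup_i|G_i(x+tv)-G_i(x)|$ need not be $o(t)$ even when every $\ap DG_i(x)v=0$ (the rate of convergence depends on $i$). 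One only gets $\mathrm{md}(G,x)(v)\geq\Vert \ap DG(x)v\Vert_\infty$ for free, which yields the easy implication $(2)\Rightarrow(3)$. The converse --- that $\rank(\ap DG)<k$ a.e.\ forces $\H^k(G(E))=0$, equivalently that the metric Jacobian vanishes a.e.\ --- is precisely \cite[Theorem 2.2]{HM1} (Lemma~\ref{intulinfty}), whose proof is itself a Sard-type argument. Your plan invokes this equivalence twice: once for $(2)\Leftrightarrow(3)$ and once more in $(3)\Rightarrow(1)$ at the step ``$\rank(\ap DG)<k$ a.e.\ is equivalent to $J_k^{\mathrm{md}}(G)=0$ a.e.'' Unless you cite that result explicitly, as the paper does, your argument is missing its hardest ingredient.

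Second, the derivation of $(\ast)$ requires $[x,x+tv]\subset E$ for small $t$, which for a merely measurable $E$ can fail for \emph{every} $t>0$; and you cannot reduce to open $E$, since a Lipschitz map from $E$ into a general quasiconvex complete space $X$ need not extend to a neighborhood of $E$ when $k\geq 2$ (quasiconvexity only provides extensions from subsets of a line). The repair is to restrict $f$ to $E\cap(x+\bbbr v)$, fill the complementary gaps with quasiconvex curves to produce a genuine Lipschitz curve $\Gamma_t$ in $X$, apply the weak BLD inequality to $\Gamma_t$, and then control the extra length contributed by the gap-filling arcs (on which $\Phi\circ\Gamma_t$ no longer coincides with $G$) via one-dimensional density of $E$ along the line --- which is essentially the mechanism the paper implements globally through its Fubini slice selection, Lemma~\ref{si}, and the quasiconvex extension of $f\circ\gamma$. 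Note also that Lemma~\ref{separable} plays no role here: the image of a curve is automatically separable, so the identity $\ell(\eta)=\int|\eta'|$ is available without it; that lemma is needed only to deduce Theorem~\ref{main} from Theorem~\ref{main-quasi} when $Y$ is non-separable. With these two repairs the local, metric-differential route appears viable and would be an interesting alternative to the paper's argument, but in its current form both of its load-bearing steps are unsupported.
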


The last condition (3) requires some explanation.
Let $g = (g_1, g_2, \ldots) : \mathbb{R}^k \supset E \to \ell^\infty$ be an $L$-Lipschitz mapping.
Then the components $g_i : E \to \mathbb{R}$ are also $L$-Lipschitz. 
Hence, for $\H^k$-almost all points $x \in E$, 
the functions $g_i$, 
$i \in \mathbb{N}$ are approximately differentiable at $x \in E$. 
We define the approximate derivative of $g$ component-wise as follows:
$$
\ap Dg(x)=
\left\lceil
\begin{array}{c}
\ap Dg_1(x)\\
\ap Dg_2(x)\\
\vdots
\end{array}
\right\rceil
$$
For each $i \in \mathbb{N}$, $\ap Dg_i(x)$ is a vector in $\mathbb{R}^k$ with components bounded by $L$.
Thus $\ap Dg(x)$ can be regarded as an $\infty \times k$ matrix of real numbers whose components are bounded by $L$.
It is easy to see that, for an $\infty \times k$ matrix, the row rank equals the column rank.
Indeed, the rank-nullity theorem still holds for such matrices.
Therefore, the rank of $\ap Dg(x)$ is always at most $k$.

The paper is structured as follows. In Section~\ref{2} we show how to deduce Theorem~\ref{main} from Theorem~\ref{main-quasi}, 
and in Section~\ref{3} we prove Theorem~\ref{main-quasi}.

\section{Proof of Theorem~\ref{main} from Theorem~\ref{main-quasi}}
\label{2}

If $Y$ is a separable space, Theorem~\ref{main} follows very easilly from Theorem~\ref{main-quasi}.
Indeed, every separable metric space admits an isometric (Kuratowski) embedding $\kappa:Y\to\ell^\infty$,
and the composition $\kappa\circ\Phi:X\to\ell^\infty$
is still a weak BLD mapping. 
Thus for any Lipschitz mapping $f:\bbbr^k \supset E \to X$, Theorem~\ref{main-quasi} implies that
$\H^k(f(E)) = 0$ in $X$ 
if and only if 
$\H^k((\kappa\circ\Phi)(f(E))) = 0$ in $\ell^{\infty}$.
However, the last condition is equivalent to $\H^k(\Phi(f(E)))=0$ since isometries preserve Hausdorff measure and hence
$$
\H^k(\Phi(f(E))) = \H^k(\kappa(\Phi(f(E)))).
$$

If $Y$ is not separable, the arguments are slightly more complicated.
The metric space $\Phi(f(E))\subset Y$ with the induced metric is separable, 
so it admits an isometric embedding 
$\kappa:\Phi(f(E))\to\ell^\infty$.
We would like to mimic the above argument, but there is a technical issue:
the mapping $\kappa\circ\Phi$
is defined only on the set $f(E)\subset X$, 
and in general this set is neither quasiconvex nor complete as a metric space with the induced metric. 
It is, however, a separable subset of $X$.
We may thus use the following lemma to reduce $X$ to a separable, quasiconvex, complete space $\tilde{X}$ containing $f(E)$.

\begin{lemma}
\label{separable}
Let $(X,d)$ be a quasiconvex and complete metric space and let $A\subset X$ be a separable subset. Then there is a subset 
$\tilde{X}\subset X$ containing $A$ such that $(\tilde{X},d)$ is separable, quasiconvex, and complete.
\end{lemma}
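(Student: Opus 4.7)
The plan is to construct $\tilde X$ by iteratively adjoining short quasiconvex curves between countable dense subsets and then taking the closure in $X$. Set $\tilde X_0=A$; given $\tilde X_n$, choose a countable dense $D_n\subset\tilde X_n$ with $D_n\supset D_{n-1}$, and, using quasiconvexity of $X$, pick for each ordered pair $(u,v)\in D_n\times D_n$ a curve $\gamma_{u,v}:[0,1]\to X$ with endpoints $u,v$ and $\ell(\gamma_{u,v})\le C_q d(u,v)$. Let
\[
\tilde X_{n+1}=\tilde X_n\cup\bigcup_{(u,v)\in D_n\times D_n}\gamma_{u,v}([0,1]),\qquad \tilde X=\overline{\textstyle\bigcup_{n\ge 0}\tilde X_n},
\]
with the closure taken in $X$.

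Separability and completeness will be automatic. Each curve image is compact and hence separable, so each $\tilde X_n$ is a countable union of separable sets; thus $\bigcup_n\tilde X_n$ and its closure $\tilde X$ are separable, and $\tilde X$ is complete as a closed subset of the complete space $X$.

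The real content will be quasiconvexity. Writing $D=\bigcup_n D_n$, density of each $D_n$ in $\tilde X_n$ implies that $D$ is dense in $\tilde X$. Given $x,y\in\tilde X$ with $d(x,y)=\eps>0$, choose $x_j,y_j\in D$ with $d(x,x_j),\,d(y,y_j)\le 2^{-j}\eps$. Nestedness of the $D_n$ places each consecutive pair $x_j,x_{j+1}$ (and likewise $y_j,y_{j+1}$ and $x_1,y_1$) in a common $D_n$, so the already-selected curves
\[
\alpha_j:=\gamma_{x_{j+1},x_j},\qquad \beta_j:=\gamma_{y_j,y_{j+1}},\qquad \gamma:=\gamma_{x_1,y_1}
\]
lie in $\tilde X$ and have lengths at most $\tfrac32 C_q 2^{-j}\eps$, $\tfrac32 C_q 2^{-j}\eps$, and $2C_q\eps$ respectively. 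Concatenate them into a single path $\sigma:[0,1]\to\tilde X$ by placing $\gamma$ on $[\tfrac14,\tfrac34]$, the $\alpha_j$'s on successive subintervals of $[0,\tfrac14]$ telescoping toward $t=0$, and symmetrically the $\beta_j$'s on $[\tfrac34,1]$. The total length is then at most $5C_q\eps$, yielding quasiconvexity of $\tilde X$ with constant $5C_q$.

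The hard part will be verifying that $\sigma$ is continuous at the two limit endpoints $t=0$ and $t=1$: one must check that the infinite concatenation extends as a genuine continuous path to $x$ and $y$. This follows from the length bound on $\alpha_j$ together with the triangle inequality, which confines the image of $\alpha_j$ to a ball about $x$ of radius $O(C_q 2^{-j}\eps)\to 0$; the analogue holds for $\beta_j$ at $t=1$.
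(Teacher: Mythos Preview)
Your argument is correct and follows essentially the same approach as the paper's: both build $\tilde X$ by inductively adjoining quasiconvex curves between points of countable dense sets and then taking the closure, and both verify quasiconvexity by approximating arbitrary $x,y\in\tilde X$ with nested dense sequences and concatenating the pre-selected curves (including the check that the infinite concatenation is continuous at the endpoints). The only notable difference is cosmetic: the paper scales the approximating sequences by an extra small parameter to obtain a quasiconvexity constant arbitrarily close to $C_q$, whereas your choice yields $5C_q$; this does not affect the statement.
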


Before proving the lemma, we will show how to use it to complete the proof of Theorem~\ref{main}.
Set $A = f(E)$ and choose the space $\tilde{X}$ as in the lemma.
Since $\tilde{X}$ is separable, so too is $\Phi(\tilde{X}) \subset Y$.
We thus have an isometric embedding 
$\tilde{\kappa}:\Phi(\tilde{X})\to\ell^\infty$,
and so the mapping $\tilde{\kappa} \circ \Phi: \tilde{X} \to \ell^{\infty}$ is weak BLD.
Thus it follows from Theorem~\ref{main-quasi}
that $\H^k(f(E)) = 0$ in $\tilde{X}$ (and thus in $X$) 
if and only if 
$$
\H^k(\Phi(f(E))) = \H^k(\tilde{\kappa}(\Phi(f(E)))) = 0
$$
in $Y$.
This completes the proof of Theorem~\ref{main}.
It remains to prove the lemma.

\begin{proof}[Proof of Lemma~\ref{separable}]
Choose a countable and dense subset $A_1=\{x_i\}_{i=1}^{\infty}$ of $A$.
For any $i,j \in \bbbn$, 
choose a quasiconvex curve $\gamma_{ij}:[0,1] \to X$ connecting $x_i$ to $x_j$.
Define the set 
$
\cJ_1 := \{ \gamma_{ij} \}_{i,j=1}^{\infty}.
$ 
This is a countable family of quasiconvex curves connecting all pairs of points in $A_1$.

Suppose by way of induction that the countable set $A_n$ and countable family of curves $\cJ_n$ have been defined.
Define $A_{n+1}$ to be a countable, dense subset of $\bigcup_{\gamma \in \cJ_n} \gamma([0,1])$ such that $A_{n+1} \supset A_n$.
This is possible since each $\gamma([0,1])$ is separable and since the family $\cJ_n$ is countable.
As above, define $\cJ_{n+1}$ to be a countable family of quasiconvex curves connecting all pairs of points in $A_{n+1}$
by selecting one curve for each pair of points.

Set $\tilde{A}=\bigcup_{n=1}^\infty A_n$,
and define $\tilde{X}$ to be the closure of $\tilde{A}$. 
Clearly, $A\subset\tilde{X}$, 
and $\tilde{X}$ is separable. 
Moreover, $\tilde{X}$ is complete as it is a closed subset of a complete space.
It remains to show that $(\tilde{X},d)$ is quasiconvex.
If $x,y\in\tilde{A}$, then $x,y\in A_n$ for some $n$ (because $A_1\subset A_2\subset A_3\subset\ldots$). 
Hence the points $x$ and $y$ can be connected by a quasiconvex curve $\gamma$ that belongs to $\cJ_n$. 
Since $A_{n+1}\cap\gamma([0,1])$ is dense in $\gamma([0,1])$, we have that $\gamma([0,1])\subset\tilde{X}$. 

Let $\eps>0$.
Fix $x,y\in \tilde{X}$ with $x\neq y$.
Then there are sequences $\{ x_k\}_{k=1}^\infty$ and $\{ y_k\}_{k=1}^\infty$ in $\tilde{A}$ with
$$
d(x_{k},x)\leq \eps d(x,y) 2^{-k}
\quad
\text{and}
\quad
d(y_{k},y)\leq \eps d(x,y) 2^{-k}.
$$
It easily follows from the triangle inequality that
$$
d(x_k,x_{k+1})\leq \eps d(x,y) 2^{-(k-1)}
\quad
\text{and}
\quad
d(y_k,y_{k+1})\leq \eps d(x,y) 2^{-(k-1)}
$$
Also 
$$
d(x_1,y_1)\leq (1+\eps)d(x,y).
$$
Hence
$$
d(x_1,y_1) + \sum_{k=1}^\infty d(x_k,x_{k+1}) + \sum_{k=1}^\infty d(y_k,y_{k+1}) \leq (1+5\eps)d(x,y).
$$
By the arguments above, 
we may connect $x_1$ and $y_1$ by a quasiconvex curve $\gamma_0$ in $\tilde{X}$ of length at most $C_q \, d(x_1,y_1)$.
Here, $C_q$ is the quasiconvexity constant associated with $X$.
For $k\in\bbbn$ we connect $x_k$ to $x_{k+1}$ by a quasiconvex curve 
$\alpha_k$ in $\tilde{X}$ of length at most $C_q \, d(x_k,x_{k+1})$
and connect $y_k$ to $y_{k+1}$ by a quasiconvex curve 
$\beta_k$ in $\tilde{X}$ of length at most $C_q \, d(y_k,y_{k+1})$.
Concatenating these curves in the obvious order creates a rectifiable curve $\gamma$
in $\tilde{X}$ with length
$$
\ell(\gamma) = \ell(\gamma_0) + \sum_{k=1}^{\infty} \ell(\alpha_k)  + \sum_{k=1}^{\infty} \ell(\beta_k)
\leq C_q (1+5\eps) d(x,y).
$$
The curve $\gamma$ connects $x$ and $y$. 
Therefore, the space $\tilde{X}$ is quasiconvex with any quasiconvexity constant larger than $C_q$.
\end{proof}

\section{Proof of Theorem~\ref{main-quasi}}
\label{3}

Suppose that $(X,d)$ is complete and quasiconvex 
and $\Phi : X \to \ell^\infty$ is weak BLD.
Suppose also that $E \subset \bbbr^k$ and $f : E \to X$ is Lipschitz.
The implication from (1) to (2) is obvious because the mapping $\Phi$ is Lipschitz. 
The equivalence between (2) and (3) follows from the following
result \cite[Theorem 2.2]{HM1}:

\begin{lemma}
\label{intulinfty}
Let $E \subset \mathbb{R}^k$ be measurable and let $g : E \to \ell^\infty$ be a Lipschitz mapping. Then $\H^k(g(E))=0$ if and only if $\rank(\ap Dg(x)) < k$, 
$\H^k$-a.e. in $E$.
\end{lemma}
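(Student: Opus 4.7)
The plan is to handle the two implications separately. For $\rank(\ap Dg(x)) < k$ a.e.\ $\Rightarrow$ $\H^k(g(E)) = 0$, I would proceed by a Sard-type covering argument adapted to the $\ell^\infty$ target. For $\H^k$-a.e.\ $x \in E$, the rank hypothesis provides a unit vector $v(x) \in \mathbb{R}^k$ with $\ap Dg_i(x) \cdot v(x) = 0$ for every $i$. After decomposing $E$ (up to a null set) into countably many compact pieces on which $v$ is continuous and the approximate derivatives of all components behave uniformly, I would reduce to estimating the $\H^k$-measure of $g(K)$ for a single such piece $K$. Covering $K$ by small cubes of side $r$, on each cube $Q$ the image $g(Q)$ lies within an $o(r)$-neighborhood in $\ell^\infty$ of an affine set through the image of the center of $Q$, whose linear part has dimension at most $k-1$ (namely the image of the hyperplane orthogonal to $v$ under $\ap Dg$). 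A standard covering count then bounds $g(Q)$ by $C\delta^{1-k}$ balls of radius $r\delta$, contributing roughly $C r^k \delta$ to the $k$-dimensional Hausdorff premeasure; summing over all cubes and letting $\delta \to 0$ gives $\H^k(g(K)) = 0$.

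For the converse $\H^k(g(E)) = 0 \Rightarrow \rank(\ap Dg(x)) < k$ a.e., I would argue by contrapositive. Assume the set $F = \{x \in E : \rank(\ap Dg(x)) = k\}$ has positive Lebesgue measure. For each $x \in F$, the rows of $\ap Dg(x)$ span $\mathbb{R}^k$, so there exist indices $(i_1(x), \ldots, i_k(x)) \in \bbbn^k$ such that $\ap Dg_{i_1(x)}(x), \ldots, \ap Dg_{i_k(x)}(x)$ form a basis of $\mathbb{R}^k$. Countability of $\bbbn^k$ together with the pigeonhole principle yields a single tuple $(i_1, \ldots, i_k)$ and a positive-measure subset $F' \subset F$ on which the Jacobian of the $\mathbb{R}^k$-valued Lipschitz map $G := (g_{i_1}, \ldots, g_{i_k})$ is nonzero. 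The classical area formula gives
$$
\H^k(G(F')) = \int_{F'} |\det \ap DG(x)| \, dx > 0.
$$
The coordinate projection $\pi : \ell^\infty \to \mathbb{R}^k$ picking out the $(i_1, \ldots, i_k)$-entries is $1$-Lipschitz when $\mathbb{R}^k$ carries the sup norm, which is bi-Lipschitz equivalent to the Euclidean norm and hence $\H^k$-compatible, so
$$
0 = \H^k(g(E)) \geq \H^k(\pi(g(E))) \geq \H^k(G(F')) > 0,
$$
a contradiction.

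The hard part is the first implication. In the finite-dimensional case one can invoke a Federer--Whitney $C^1$-approximation of $g$ on Lusin-type subsets, which directly controls how small cubes deform under the map. For $\ell^\infty$-valued targets such an approximation is only available component by component, and the synthesis into a simultaneous $C^1$-approximation of all coordinates generally fails. The technical heart of the argument is to verify that the supremum over infinitely many components still produces the desired covering estimate, which ultimately relies on the uniform Lipschitz bound on $g$ together with the fact that the rank condition is a pointwise statement about the kernel of $\ap Dg(x)$ shared uniformly by every component.
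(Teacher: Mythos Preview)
Your contrapositive for $\H^k(g(E))=0 \Rightarrow \rank(\ap Dg)<k$ a.e.\ is correct and is exactly the paper's argument.

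The gap is in the forward direction. Your covering step requires that on a small cube $Q$ of side $r$ the image $g(Q\cap K)$ lie within an $o(r)$-neighborhood in $\ell^\infty$ of a $(k{-}1)$-dimensional affine set, which unpacks to
\[
\sup_i \bigl|g_i(y)-g_i(x)-\ap Dg_i(x)(y-x)\bigr| = o(|y-x|),\qquad x,y\in K.
\]
This is Fr\'echet differentiability of $g$ relative to $K$, and it is generally unavailable for Lipschitz maps into $\ell^\infty$: even after componentwise $C^1$-approximation $g_i=\tilde g_i$ on a common large set, the moduli of continuity of the $D\tilde g_i$ are not uniform in $i$, so the Taylor remainders need not collapse to $o(r)$ after taking the supremum. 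Your closing paragraph correctly flags this as the crux, but the offered justification (uniform Lipschitz bound plus a kernel vector shared by all components) only yields $O(r)$, not $o(r)$, and the covering count never gets off the ground.

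The paper's route (deferred to \cite{HM1} and mirrored in the Sard-type argument for Theorem~\ref{main-quasi}) avoids pointwise affine approximation in $\ell^\infty$ entirely. After componentwise $C^1$-approximation one stratifies by the \emph{exact} rank $j$ and changes variables so that $g_1,\ldots,g_j$ become coordinate functions; then $\partial g_\ell/\partial x_i=0$ on $K_j$ for every $i>j$ and every $\ell$. Image diameters are then controlled by a curve-length estimate rather than a Taylor estimate: for a curve $\gamma$ lying in a slice $\{\rho\}\times[0,d]^{k-j}$,
\[
\Vert g(\gamma(b))-g(\gamma(a))\Vert_\infty \le \ell(g\circ\gamma)\le \int_a^b \Vert (g\circ\gamma)'(t)\Vert_\infty\,dt,
\]
and the integrand vanishes on $\gamma^{-1}(K_j)$ (componentwise, by the chain rule and the vanishing partials) while being bounded by the Lipschitz constant off that set. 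The resulting bound $L\cdot\H^1\bigl([a,b]\setminus\gamma^{-1}(K_j)\bigr)$ is automatically uniform across all coordinates; this integral device is the missing idea that substitutes for Fr\'echet differentiability.
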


The implication from left to right is easy: $g$
composed with a projection $\pi$ of $\ell^{\infty}$ onto any $k$-dimensional subspace 
generated by a choice of $k$-coordinates in $\ell^\infty$
is Lipschitz, so $\H^k(\pi(g(E)))=0$. 
Hence, by the area formula, the 
determinant of the mapping $\pi\circ g$ equals zero a.e. 
This implies that $\rank(\ap Dg(x)) < k$ a.e.

The reverse implication is much more difficult
and follows the Sard type arguments seen in the remainder of this paper.
For details, see \cite{HM1}.

It remains to prove that (3) implies (1). 
Suppose that $\rank (\ap D(\Phi\circ f))<k$ a.e. in $E$. 
Let $\eps>0$. 
Since $(\Phi \circ f)_i:E \to \bbbr$ is Lipschitz for each $i \in \bbbn$,
we can find $g_i \in C^1(\bbbr^k)$ with
$$
\H^k(\{x \in E \, : \, (\Phi\circ f)_i(x) \neq g_i(x)\}) < \eps/2^i
$$
and $\ap D((\Phi \circ f)_i)(x) = Dg_i(x)$ for almost every $x \in E$ at which $(\Phi \circ f)_i(x)=g_i(x)$.
Thus there is a measurable set $F\subset E$ such that $\H^k(E\setminus F)<\eps$ and
$$
\Phi\circ f = (g_1,g_2,\ldots)=:g,
\qquad
\ap D(\Phi\circ f)=
\left\lceil
\begin{array}{c}
Dg_1\\
Dg_2\\
\vdots
\end{array}
\right\rceil\, =: Dg
$$
at all points of $F$. Let
$$
\tilde{F}=\{x\in F:\, \rank \ap D(\Phi\circ f)(x)=\rank Dg(x)<k\}.
$$
By assumption, $\H^k(F\setminus\tilde{F})=0$.
Recall that our goal is to prove $\H^k(f(E))=0$.
It suffices to prove $\H^k(f(\tilde{F}))=0$ 
since we may exhaust $E$ by sets $\tilde{F}$ up to a set of $\H^k$-measure zero
and since $f$ maps sets of $\H^k$-measure zero to sets of 
$\H^k$-measure zero.
Moreover, the set $\tilde{F}$
can be decomposed as follows:
$$
\tilde{F}=\bigcup_{j=0}^{k-1} K_j,
\qquad
K_j=\{x\in\tilde{F}:\, \rank Dg(x)=j\}.
$$
Thus it suffices to show that $\H^k(f(K_j))=0$ for $j=0,1,2,\ldots,k-1$.
By removing a set of measure zero we can assume that all points of $K_j$ are density points of $K_j$.

In fact, it suffices to prove that any point in $K_j$ has a cubic neighborhood 
whose intersection with $K_j$ is mapped onto a set of $\H^k$-measure zero. 
In the remainder of the paper, a ``cube'' will refer to a cube with edges parallel to the coordinate axes.

For each $j \geq 1$, we may 
apply the change of variables \cite[Lemma 2.6]{HM1} 
as in the proof of \cite[Theorem 2.2]{HM1}
and assume that
\begin{equation}
\label{eq12}
K_j\subset (0,1)^k 
\quad
\text{and}
\quad
\text{$g_i(x)=x_i$ for $i=1,2,\ldots,j$ and $x\in [0,1]^k$.}
\end{equation}
Since $\rank Dg(x)=j$ for any $x\in K_j$ 
and since $g$ fixes the first $j$ coordinates of $x$, we have
\begin{equation}
\label{eq122}
\frac{\partial g_{\ell}}{\partial x_i}(x)=0
\quad
\text{for $x\in K_j$, $i=j+1,\ldots,k$ and all $\ell=1,2,\ldots$}
\end{equation}
If $j=0$ we do not need to apply a change of variables.

Now the result will follow from the next lemma after a standard application of the Vitali type $5r$-covering lemma. 
Indeed, it allows us to cover $K_j$ by cubes $Q_{x_i}$ 
so that the cubes $5^{-1}Q_{x_i}$ are pairwise disjoint
and thus bound the Hausdorff content $\H^k_\infty(f(K_j))$ by $C m^{j-k}$ for some $C>0$,
and this can be made arbitrarily small
since $m$ is arbitrary and $j-k < 0$.
See the argument following the statement of Lemma 2.7 in \cite{HM1} for full details.
\begin{lemma}
Let $M$ be the BLD constant of $\Phi$, 
let $C_q$ be the quasiconvexity constant of $X$,
and let $L$ be the Lipschitz constant of $f$.
Under the assumptions \eqref{eq12}, 
there is a constant $C = C(k)C_q^2M^2 > 0$ 
such that, for any integer $m \geq 1$ and $x \in K_j$, 
there is a closed cube $Q_x \subseteq [0, 1]^k$ centered at $x$ of edge length $d_x$ such that 
$f (K_j \cap Q_x)$ can be covered by $m^j$ balls in $X$, 
each of radius $CLd_xm^{-1}$.
\end{lemma}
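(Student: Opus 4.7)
The plan is to follow the strategy of the analogous lemma in \cite{HM1}, adapted to the $\ell^\infty$-valued target. Fix $x \in K_j$, which we have arranged to be a density point. Choose $d_x > 0$ small enough that the cube $Q_x$ of edge length $d_x$ centered at $x$ lies in $[0,1]^k$ and that $K_j$ has density close to one in $Q_x$, with a threshold determined by $m$ and auxiliary continuity considerations. Subdivide $Q_x$ into $m^j$ rectangular boxes $B_\alpha$ of dimensions $(d_x/m)^j \times d_x^{k-j}$ by cutting each of the first $j$ coordinate directions into $m$ equal subintervals. The lemma reduces to showing that $f(K_j \cap B_\alpha)$ has $X$-diameter at most $CLd_x/m$ for each $\alpha$, so that each box contributes a single ball of radius $CLd_x/m$ covering the image.

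To bound the diameter, fix $y_0, z \in K_j \cap B_\alpha$ and, using the quasiconvexity of $X$, select a rectifiable curve $\Gamma : [0,1] \to X$ from $f(y_0)$ to $f(z)$ with $\ell_X(\Gamma) \leq C_q\, d_X(f(y_0), f(z))$. The weak BLD property of $\Phi$ gives
\[
d_X(f(y_0), f(z)) \leq \ell_X(\Gamma) \leq M\, \ell_Y(\Phi \circ \Gamma),
\]
so the work reduces to estimating the length of the curve $\Phi \circ \Gamma$ in $\ell^\infty$ by a constant multiple of $Ld_x/m$.

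For this length estimate I exploit the structural conditions \eqref{eq12} and \eqref{eq122}. The normalization $g_i(y) = y_i$ for $i \leq j$ gives $|g_i(y_0) - g_i(z)| \leq d_x/m$ on $B_\alpha$. Moreover, since $\partial g_\ell / \partial x_i = 0$ on $K_j$ for every $\ell$ and every $i > j$, continuity of the $C^1$ extensions combined with the density of $K_j$ near $x$ keeps the variation of each $g_\ell$ in the last $k-j$ coordinates small on $K_j \cap Q_x$. The curve $\Gamma$ is then built as a concatenation of short quasiconvex curves through a chain of intermediate points in $K_j \cap B_\alpha$ shadowing the straight segment from $y_0$ to $z$; on the portions of $\Phi \circ \Gamma$ that sit close to the image of $K_j$, only the first $j$ columns of $Dg$ contribute to the metric derivative of $\Phi \circ \Gamma$, each contributing at most a uniform constant times $d_x/m$, while the remaining ``bad'' portion is controlled by the small measure of the complement of $K_j$ times the Lipschitz bound on $\Phi \circ f$.

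The main obstacle is obtaining this bound uniformly in $\ell \in \bbbn$. Each $\partial g_\ell / \partial x_i$ is continuous individually, but the family $\{\partial g_\ell / \partial x_i\}_\ell$ is not a priori equicontinuous, so naive componentwise estimates do not assemble into a bound on $\ell_Y(\Phi \circ \Gamma)$ in the $\ell^\infty$ sense. I expect Lemma~\ref{separable} to enter precisely here, reducing the argument to a separable, quasiconvex, complete subspace of $X$ on which the weak BLD property of $\Phi$ and metric differentiation provide the uniform length control on $\Phi \circ \Gamma$ that is otherwise missing. This uniform step is the essential new ingredient compared with the Euclidean target case of \cite{HM1}.
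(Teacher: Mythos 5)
Your overall frame (density point, cube, subdivision into $m^j$ boxes, reduction to a diameter bound, and the inequality $d_X(f(y_0),f(z)) \le \ell_X(\Gamma) \le M\,\ell(\Phi\circ\Gamma)$) matches the paper, but the core of the argument --- how to produce a curve $\Gamma$ for which $\ell(\Phi\circ\Gamma)$ can actually be estimated --- has genuine gaps, and the fix you propose at the end is not the one that works.

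First, taking $\Gamma$ to be an arbitrary quasiconvex curve in $X$ from $f(y_0)$ to $f(z)$ is circular: the only available bound on $\ell(\Phi\circ\Gamma)$ for such a curve is $\ell(\Phi\circ\Gamma)\le MC_q\, d_X(f(y_0),f(z))$, which is exactly the quantity you are trying to control. The curve must instead be manufactured from $f$ itself along a path $\gamma$ in the \emph{domain} lying almost entirely in $K_j$, so that $\Phi\circ\Gamma=g\circ\gamma$ on $\gamma^{-1}(K_j)$ and the gaps are filled by quasiconvex arcs. You gesture at this (``a chain of intermediate points\dots shadowing the straight segment''), but you give no mechanism guaranteeing that the path meets the complement of $K_j$ in a set of small length; a straight segment between two points of $K_j$ can lie entirely outside $K_j$. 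The paper gets this from two ingredients you omit: a Fubini argument selecting a fiber $\{\rho\}\times[0,d]^{k-j}$ on which the complement of $K_j$ has $(k-j)$-measure less than $m^{j-k}d^{k-j}$ (this also replaces your per-box diameter reduction, since every point of the box is within $C(k)d/m$ of that fiber), and Lemma~\ref{si}, which produces a two-segment path inside the fiber meeting the complement in length at most $C(k)d/m$. Working in a fixed fiber also forces $\gamma$ to move only in the last $k-j$ coordinates, which is precisely where \eqref{eq122} applies.

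Second, the ``main obstacle'' you identify --- lack of equicontinuity of $\{\partial g_\ell/\partial x_i\}_\ell$ --- is an artifact of trying to use points merely \emph{close} to $K_j$, and Lemma~\ref{separable} has nothing to do with its resolution (that lemma is used only in Section~\ref{2} to pass to a separable target). No continuity of the derivatives is needed: on $\gamma^{-1}(K_j)$ the derivative $(g\circ\gamma)'(t)$ vanishes \emph{exactly}, for all components simultaneously, by \eqref{eq122}, and the complementary set contributes at most its length times the Lipschitz constant of $\Phi\circ\Gamma$. The genuinely new ingredient over \cite{HM1} is the elementary estimate $\ell(\eta)\le\int_a^b\Vert\eta'(t)\Vert_\infty\,dt$ for Lipschitz curves $\eta:[a,b]\to\ell^\infty$, which converts this pointwise information into $\ell(\Phi\circ\Gamma)\le C(k)MC_q^2Lm^{-1}d$. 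Without the exact vanishing on $K_j$ and that length lemma, your componentwise estimates do not assemble into a bound in $\ell^\infty$, as you yourself observe.
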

\begin{proof}
Since $x$ is a density point of $K_j$, 
there is a cube $Q \subseteq [0, 1]^k$ centered at $x$ with edge length $d$
such that $\H^k(Q \setminus K_j) < m^{-k}d^k$. 
We can assume that $Q = [0, d]^k$. 
Divide $[0,d]^j$ into cubes $\{ Q_\nu \}_{\nu = 1}^{m^j}$ of edge length $d/m$
with pairwise disjoint interiors. 
We want to prove that, inside each rectangular box $Q_\nu \times [0, d]^{k-j}$, the set $K_j$ is mapped by $f$ into a small ball.
In particular, we want 
\begin{equation}
\label{boxshrink}
\diam_X ( f((Q_\nu \times [0, d]^{k-j}) \cap K_j) ) < C(k)C_q^2M^2Ldm^{-1}.
\end{equation}

Since $\H^k((Q_\nu\times [0, d]^{k-j})\setminus K_j)\leq \H^k(Q \setminus K_j) < m^{-k}d^k$,
we may use Fubini's theorem as in the proof of \cite[Lemma 2.7]{HM1} 
to find $\rho \in Q_\nu$ such that
\begin{equation}
\label{43}
\H^{k-j}((\{\rho\} \times [0, d]^{k-j}) \setminus K_j) < m^{j-k}d^{k-j}.
\end{equation}
In particular, 
every point in $\{\rho\} \times [0, d]^{k-j}$ is at a distance no more than $C(k)m^{-1}d$ from the set
$(\{\rho\} \times [0, d]^{k-j})\cap K_j$. 
Hence every point in $Q_\nu \times [0, d]^{k-j}$ 
(and thus every point in $(Q_\nu \times [0, d]^{k-j}) \cap K_j$) 
is at a distance at most $C(k)m^{-1}d$ from the set 
$(\{\rho\} \times [0, d]^{k-j}) \cap K_j$. 
Since $f$ is $L$-Lipschitz,
in order to prove \eqref{boxshrink}
it suffices to show that
\begin{equation}
\label{44}
\diam_X(f((\{\rho\}\times [0,d]^{k-j})\cap K_j)) < C(k)C_q^2M^2Ldm^{-1}.
\end{equation}

To begin to prove \eqref{44}, we will recall Lemma~4.4 from \cite{HM1}.
\begin{lemma}
\label{si}
Let $E \subset \mathcal{Q}$ be a measurable subset of a cube $\mathcal{Q} \subset \mathbb{R}^n$.
Then there is a constant $C=C(n)>0$ such that, for any $x\in \mathcal{Q}$,
\begin{equation}
\label{blabla}
\H^n(\{y\in \mathcal{Q}:\, \H^1(\overline{xy}\cap E) \leq C\H^n(E)^{1/n}\})>\frac{\H^n(\mathcal{Q})}{2}
\end{equation}
where $\overline{xy}$ is the segment from $x$ to $y$.
\end{lemma}
This lemma implies that, if the measure of $E \subset \mathcal{Q}$ is small,
then more than half of the intervals in $\mathcal{Q}$ intersect $E$ along a short subset.
See \cite[Lemma 4.4]{HM1} for a short proof.

Under the assumptions of Lemma~\ref{si}, 
for any $x, y \in \mathcal{Q}$ we can find $z \in \mathcal{Q}$ such that 
$$
\H^1(\overline{xz}\cap E)+\H^1(\overline{zy}\cap E) \leq C\H^n(E)^{1/n}.
$$
That is, the curve $\overline{xz}+\overline{zy}$ connecting $x$ to $y$ intersects the set $E$ along a subset 
of length at most $C\H^n(E)^{1/n}$.
Notice also that this curve has length no larger than $2 \diam(\mathcal{Q})$. 

Applying this argument with dimension $n = k-j$, 
cube $\mathcal{Q} = \{\rho\}\times[0, d]^{k-j}$, 
and subset $E = ( \{\rho\}\times[0, d]^{k-j})\setminus K_j$, every pair of 
points $x, y \in (\{\rho\}\times[0, d]^{k-j}) \cap K_j$ can be connected by a curve
of length at most $2d\sqrt{k-j}$ 
(which is two times the diameter of $\{\rho\}\times[0, d]^{k-j}$) 
whose intersection with $(\{\rho\}\times[0, d]^{k-j}) \setminus K_j$ has length no more than $C(k)m^{-1}d$ (by (\ref{43})).

Fix $x,y \in (\{\rho\}\times[0, d]^{k-j}) \cap K_j$
and choose $\gamma$ to be a curve in $\{\rho\}\times[0, d]^{k-j}$
as described in the previous paragraph.
Parametrize $\gamma$ by arc-length so that it is a $1$-Lipschitz curve.
The mapping $f\circ\gamma$ is $L$-Lipschitz 
and is defined on the set $\gamma^{-1}(K_j) \subset [0,\ell(\gamma)]$. 
This map uniquely extends to an $L$-Lipschiz map defined on the closure 
of $\gamma^{-1}(K_j)$ 
(since it is Lipschitz and $X$ is complete). 
The complement of $\overline{\gamma^{-1}(K_j)}$ consists of countably many (relatively) open intervals whose total 
length is bounded by $C(k)m^{-1}d$. 
Since the space $X$ is quasiconvex, 
we can extend $f\circ\gamma$ from $\overline{\gamma^{-1}(K_j)}$ 
to a $C_qL$-Lipschitz curve $\Gamma:[0,\ell(\gamma)]\to X$ connecting $f(x)$ to $f(y)$.
Indeed, we may construct this extension by choosing 
for each open interval in the complement of $\overline{\gamma^{-1}(K_j)}$
a quasiconvex curve in $X$
(which is $C_qL$-Lipschitz on the interval after possibly reparameterizing)
that connects the images of the endpoints of the interval.

According to the paragraph preceding the statement of Theorem~\ref{main}, 
the mapping $\Phi$ is $MC_q$-Lipschitz so
the curve $\Phi\circ\Gamma:[0,\ell(\gamma)] \to \ell^\infty$
is $MC_q^2L$-Lipschitz. 
In order to prove \eqref{44}, it suffices to show that 
\begin{equation}
\label{eq367}
\ell(\Phi\circ\Gamma)\leq C(k)MC_q^2Lm^{-1}d.
\end{equation}
Indeed, since $\Phi$ is weak BLD we would have
$$
d(f(x),f(y)) \leq \ell(\Gamma) \leq M \ell(\Phi\circ\Gamma) \leq C(k)C_q^2M^2Lm^{-1}d.
$$
Since we may find such a curve $\Gamma$ for any $x, y \in (\{\rho\} \times [0, d]^{k-j}) \cap K_j$, \eqref{44} follows.

Thus it remains to prove the estimate \eqref{eq367}. Since $\Phi \circ \Gamma$ is a curve in $\ell^\infty$, 
the proof of this estimate is slightly more
subtle than that of the corresponding estimate in the proof of \cite[Theorem~4.2]{HM1} where the curve was in $\bbbr^N$.
The proof is a result of the following lemma:

\begin{lemma}
If $\eta=(\eta_1,\eta_2,\ldots):[a,b]\to\ell^\infty$ is a Lipschitz curve, then
$$
\ell(\eta)\leq \int_a^b\Vert\eta'(t)\Vert_\infty\, dt,
\quad
\text{where $\eta'=(\eta_1',\eta_2',\ldots)$.}
$$
\end{lemma}
\begin{proof}
For $[s,t] \subset [a,b]$,
$$
\Vert \eta(t)-\eta(s)\Vert_\infty=\sup_i |\eta_i(t)-\eta_i(s)| \leq
\sup_i \int_s^t|\eta_i'(\tau)| \, d\tau \leq \int_s^t\Vert \eta'(\tau)\Vert_\infty\, d\tau,
$$
so for any partition $a=t_0<t_1<\cdots<t_n=b$ of $[a,b]$, we have
$$
\sum_{k=0}^{n-1} \Vert\eta(t_{k+1})-\eta(t_k)\Vert_\infty \leq
\sum_{k=0}^{n-1}\int_{t_k}^{t_{k+1}} \Vert \eta'(\tau)\Vert_\infty\, d\tau =
\int_a^b \Vert\eta'(\tau)\Vert_\infty\, d\tau.
$$
Since $\ell(\eta)$ equals the supremum of the sums on the left hand side over all partitions of $[a,b]$, the lemma follows.
\end{proof} 

Note that, on the set $\gamma^{-1}(K_j)$, the curve $\Phi\circ\Gamma$ coincides with $g\circ\gamma$. 
Thus for almost every $t \in \gamma^{-1}(K_j)$ we have
$$
(\Phi\circ\Gamma)'(t) = (g\circ\gamma)'(t) = 0.
$$
This is an easy consequence of \eqref{eq122}
since $\gamma$ is a curve in $\{\rho\}\times[0, d]^{k-j}$. 
Hence the length of the curve $\Phi\circ\Gamma$ satisfies
\begin{eqnarray*}
\ell(\Phi\circ\Gamma) 
& \leq &
\int_0^{\ell(\gamma)}\Vert (\Phi\circ\Gamma)'(t)\Vert_\infty \, dt \\
& \leq &
(MC_q^2L) \H^1\left([0,\ell(\gamma)]\setminus \gamma^{-1}(K_j) \right) \\
& \leq &
C(k)MC_q^2Lm^{-1}d
\end{eqnarray*}
which proves \eqref{eq367}. The proof is complete.
\end{proof}

\end{document}